\newcommand{\abs}[1]{\left\lvert#1\right\rvert}
\newtheorem{theorem}{Theorem}[section]
\newtheorem{lemma}[theorem]{Lemma}
\newtheorem{conjecture}{Conjecture}[section]
\newtheorem{remark}{Remark}[section]
\def\smallskip{\addvspace{\smallskipamount}}
\def\medskip{\addvspace{\medskipamount}}
\def\bigskip{\addvspace{\bigskipamount}}
\def\makefootline{\baselineskip=24pt \line{\the\footline}}
\def\pagecontents{\ifvoid\topins\else\unvbox\topins\fi
   \dimen@=\dp255 \unvbox255
   \ifvoid\footins\else
      \vskip\skip\footins \footnoterule \unvbox\footins\fi
     \ifr@ggedbottom \kern-\dimen@ \vfil \fi}
\def\footnoterule{\kern-3pt\hrule width 2truein \kern 2.6pt}
\begin{document}

\title{Complex Dynamics of a Second Order Rational Difference Equation}

\author{Sk. Sarif Hassan and Anupam Bhandari\\
  \small {Department of Mathematics, College of Engineering Studies,}\\
  \small {University of Petroleum and Energy Studies,}\\
  \small {Bidholi, Dehradun, India.}\\
  \small Emails: {\texttt{\textcolor[rgb]{0.00,0.07,1.00}{s.hassan@ddn.upes.ac.in,} and \texttt{\textcolor[rgb]{0.00,0.07,1.00}{a.bhandari@ddn.upes.ac.in}}}}\\
}

\maketitle
\begin{abstract}
\noindent The dynamics of the second order rational difference equation $\displaystyle{z_{n+1}=\frac{\alpha + z_{n-1}}{\beta z_n + z_{n-1}}}$ with the real parameter $\alpha$, $\beta$ and arbitrary non-negative real initial conditions is investigated a decade ago. In the present manuscript, the same has been revisited considering the parameters $\alpha$ and $\beta$ as complex numbers and the initial values as arbitrary complex numbers. It is found that some of the results which are valid in real line but does not valid in complex plane. The chaotic solutions of the difference equation with complex parameters are achieved, however there does not exists such solutions in the case of real parameters. \\
\end{abstract}

\begin{flushleft}\footnotesize
{Keywords: Rational difference equation, Local asymptotic stability, Chaotic trajectory and Periodicity. \\

{\bf Mathematics Subject Classification: 39A10 \& 39A11}}.
\end{flushleft}

\section{Introduction and Background}

Consider the second order rational difference equation

\begin{equation}
\displaystyle{z_{n+1}=\frac{\alpha + z_{n-1}}{\beta z_n + z_{n-1}}},  n=0,1,2,\ldots
\label{equation:total-equationA}
\end{equation}%
where the parameters $\alpha$, $\beta$ and the initial conditions $z_{-1}$  and $z_{0}$ are arbitrary complex numbers.

\addvspace{\bigskipamount}
\noindent
This rational difference equation Eq.(\ref{equation:total-equationA}) is studied considering the parameters $\alpha$ and $\beta$ as real numbers and the initial conditions as non-negative real numbers in \cite{C-L}, \cite{C-V}, \cite{P-Y} \& \cite{Ca-Ch-L-Q-1}. In this manuscript, it is an attempt to understand the dynamics in the complex plane. Alike work is done for other second order rational difference equations in \cite{S-S}, \cite{S-H} \& \cite{S-E}. Applications of such kind of dynamical systems have been studied in \cite{1-1} \& \cite{2-2}.\\

\noindent
Here, a very brief review of the difference equation Eq.(\ref{equation:total-equationA}) in real line is adumbrated \cite{C-L}. The results are as follows: \\

\begin{itemize}
  \item The equilibrium of the equation Eq.(\ref{equation:total-equationA}) is locally asymptotically stable when $\beta < 1+ 4\alpha$ and unstable and, more precisely, a saddle point equilibrium when $\beta > 1+ 4\alpha$.
  \item Equation Eq.(\ref{equation:total-equationA}) has a prime period-two solution ($\phi$ and $\psi$) if and only if $\beta > 1+ 4\alpha$. Furthermore, $\beta > 1+ 4\alpha$ holds, the period-two solution is ‘‘unique’’ and the values of $\phi$ and $\psi$ are the positive roots of the quadratic equation $t^2-t+\frac{\alpha}{\beta-1}=0$.
  \item Assume $\beta \leq 1+ 4\alpha$, then the equilibrium of Eq.(\ref{equation:total-equationA}) is a global attractor.
\end{itemize}

\noindent
What is still an open problem in the real set up is as follows: \\
\noindent
Assume that $\beta > 1+ 4\alpha$ holds. Investigate the basin of attraction of the prime period two cycle \cite{K-L} \& \cite{Ko-L}.\\

\noindent
Here our main purpose is to study the dynamics of Eq. (\ref{equation:total-equationA}) under the condition that the parameters and the initial conditions are arbitrary complex numbers.

\section{Local Asymptotic Stability of the Equilibriums}

The equilibrium points of Eq.(\ref{equation:total-equationA}) are the solutions of the quadratic equation
\[
\bar{z}=\frac{\alpha+\bar{z}}{\beta \bar{z}+\bar{z}}
\]
The Eq.(\ref{equation:total-equationA}) has the two equilibria points
$ \bar{z}_{1,2} =\frac{1-\sqrt{1+4 \alpha +4 \alpha  \beta }}{2 (1+\beta )}$ and $\frac{1+\sqrt{1+4 \alpha +4 \alpha  \beta }}{2 (1+\beta )}$ respectively.
\noindent
The linearized equation of the rational difference equation Eq.(\ref{equation:total-equationA}) with respect to the equilibrium point $ \bar{z}_{1} = \frac{1-\sqrt{1+4 \alpha +4 \alpha  \beta }}{2 (1+\beta )}$ is

\begin{equation}
\label{equation:linearized-equation}
\displaystyle{
z_{n+1} + \frac{\beta}{1+\beta } z_{n} + \frac{1+2 \alpha +\sqrt{1+4 \alpha +4 \alpha  \beta }}{2 \alpha +2 \alpha  \beta } z_{n-1}=0,  n=0,1,\ldots
}
\end{equation}

\noindent
with associated characteristic equation
\begin{equation}
\lambda^{2} + \frac{\beta}{1+\beta } \lambda + \frac{1+2 \alpha +\sqrt{1+4 \alpha +4 \alpha  \beta }}{2 \alpha +2 \alpha  \beta } = 0.
\end{equation}

\addvspace{\bigskipamount}

\noindent

\begin{lemma}
The zeros of a quadratic polynomial $\lambda^2-r\lambda-s=0$ lie inside unit disk in $\mathbb{C}$ if $\abs{r} < \abs{1-s} <2$.
\end{lemma}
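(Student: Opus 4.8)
The plan is to establish the conclusion with Rouch\'e's theorem, which is the natural tool for locating the zeros of a polynomial relative to a fixed contour; here the contour is the unit circle $|\lambda|=1$. Writing $f(\lambda)=\lambda^2-r\lambda-s$, I would look for an auxiliary polynomial $g$ whose two zeros are visibly inside the disk and for which the strict domination $|f(\lambda)-g(\lambda)|<|g(\lambda)|$ holds at every point of the circle; Rouch\'e then forces $f$ to have exactly two zeros inside as well. The two hypotheses should enter in complementary roles: the outer bound $|1-s|<2$ is meant to certify that the comparison polynomial $g$ is itself stable, while the inner bound $|r|<|1-s|$ should control the size of the term by which $f$ and $g$ differ.

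Concretely, I would first try the comparison $g(\lambda)=(1-s)\lambda^2$, whose only zero is the double zero at the origin, so that the requirement on the zeros of $g$ is automatic. With this choice $f(\lambda)-g(\lambda)=s(\lambda^2-1)-r\lambda$, and evaluating at the two real points of the circle, $\lambda=\pm1$, the difference collapses to $\mp r$, so the required inequality there reads exactly $|r|<|1-s|$ --- precisely the first hypothesis. This is an encouraging sign that the chosen $g$ is the right one, and it pins down why the statement is phrased in terms of $|1-s|$ rather than $|s|$.

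The main obstacle is the remaining, and decisive, step: upgrading this boundary check to a uniform strict inequality $|s(\lambda^2-1)-r\lambda|<|1-s|$ over the \emph{entire} circle $\lambda=e^{i\theta}$, $\theta\in[0,2\pi)$. Away from $\theta=0,\pi$ the term $s(\lambda^2-1)$ no longer vanishes and can grow, so the clean cancellation at $\lambda=\pm1$ is lost; this is exactly where the second hypothesis $|1-s|<2$ would have to be brought to bear, presumably after reducing the inequality to a one-variable trigonometric estimate in $\cos\theta$ and maximizing. I expect the bulk of the work --- and any delicate interplay between $r$ and $s$ that the crude bounds may conceal --- to reside in closing this global estimate. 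Should the single comparison $g(\lambda)=(1-s)\lambda^2$ prove too weak on some arc, the fallback plan is the bilinear substitution $\lambda=\frac{1+w}{1-w}$, which turns the problem into locating the zeros of $(1+r-s)w^2+2(1+s)w+(1-r-s)$ in the open left half-plane and lets one invoke a Routh--Hurwitz-type criterion, at the cost of handling the complex coefficients with more care.
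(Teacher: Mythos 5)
The paper offers no proof of this lemma at all --- it is simply asserted --- so there is nothing to compare your strategy against; the statement is a verbatim complexification of the classical real-coefficient Schur test (for real $r,s$ the condition $\abs{r}<1-s<2$ is equivalent to both roots of $\lambda^2-r\lambda-s$ lying in the open unit disk). Your Rouch\'e setup is the natural attack, and you correctly identify both the right comparison polynomial $g(\lambda)=(1-s)\lambda^2$ and the fact that everything hinges on the global estimate $\abs{s(\lambda^2-1)-r\lambda}<\abs{1-s}$ on the whole circle. But the step you defer is not a closable technicality: the lemma as stated is \emph{false} for non-real $s$. Take $r=0$ and $s=\tfrac{3}{2}i$. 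Then $\abs{r}=0<\abs{1-s}=\tfrac{\sqrt{13}}{2}\approx 1.803<2$, so the hypothesis holds, yet the zeros of $\lambda^2-\tfrac{3}{2}i$ are $\pm\sqrt{3/2}\,e^{i\pi/4}$, of modulus $\sqrt{3/2}>1$. Consistently with this, your Rouch\'e domination breaks exactly where you predicted it might: at $\lambda=i$ one has $\abs{s(\lambda^2-1)-r\lambda}=2\abs{s}=3>\abs{1-s}$.

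The structural reason is that for real $s$ the quantity $1-s$ simultaneously controls the two separate Schur--Cohn conditions ($\abs{s}<1$ and $\abs{r}<\abs{1-s}$), whereas for complex $s$ the bound $\abs{1-s}<2$ no longer implies $\abs{s}<1$, which is what your example with $r=0$ isolates (there the roots are $\pm\sqrt{s}$ and the disk condition is precisely $\abs{s}<1$). So no choice of comparison polynomial can rescue the argument; the hypothesis itself must be strengthened. A correct elementary sufficient condition in the complex case is $\abs{r}+\abs{s}<1$ (if $\abs{\lambda}\ge 1$ then $\abs{\lambda}^2=\abs{r\lambda+s}\le(\abs{r}+\abs{s})\abs{\lambda}<\abs{\lambda}$, a contradiction), and the sharp criterion is the full complex Schur--Cohn test. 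Since the paper's Theorems 2.2, 2.3 and 2.6 are derived from this lemma, the flaw propagates to them as well.
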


\noindent
The following result gives the local asymptotic stability of the equilibrium $\bar{z}_{1}$ of the Eq.(\ref{equation:total-equationA}).

\begin{theorem}
The equilibriums $\bar{z}_{1}=\frac{1-\sqrt{1+4 \alpha +4 \alpha  \beta }}{2 (1+\beta )}$ of the Eq.(\ref{equation:total-equationA}) is locally asymptotically stable if $$\abs{1+\frac{1+2 \alpha +\sqrt{1+4 \alpha +4 \alpha  \beta }}{2 \alpha +2 \alpha  \beta }}<2$$
\end{theorem}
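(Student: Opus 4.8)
The plan is to reduce the assertion to the root--location lemma stated above, via the standard linearized--stability principle for difference equations. The first step I would carry out is to recall that the equilibrium $\bar{z}_1$ is locally asymptotically stable precisely when every root $\lambda$ of the characteristic equation attached to the linearization Eq.(\ref{equation:linearized-equation}) satisfies $\abs{\lambda}<1$, i.e. when both zeros of
\[
\lambda^{2} + \frac{\beta}{1+\beta } \lambda + \frac{1+2 \alpha +\sqrt{1+4 \alpha +4 \alpha  \beta }}{2 \alpha +2 \alpha  \beta } = 0
\]
lie in the open unit disk. This criterion uses only the spectral radius of the companion matrix of the linearization, so it transfers verbatim to complex parameters and complex initial data, and it is exactly the hypothesis that the preceding Lemma is designed to certify.

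Next I would put the characteristic polynomial into the normal form $\lambda^{2}-r\lambda-s=0$ required by the Lemma. Writing $Q=\frac{1+2\alpha+\sqrt{1+4\alpha+4\alpha\beta}}{2\alpha+2\alpha\beta}$ and matching coefficients gives $r=-\frac{\beta}{1+\beta}$ and $s=-Q$, whence
\[
1-s = 1 + \frac{1+2\alpha+\sqrt{1+4\alpha+4\alpha\beta}}{2\alpha+2\alpha\beta}.
\]
Taking moduli, $\abs{1-s}$ is \emph{exactly} the quantity appearing in the hypothesis of the theorem, while $\abs{r}=\abs{\frac{\beta}{1+\beta}}$. The Lemma then guarantees that both roots sit inside the unit disk as soon as $\abs{r}<\abs{1-s}<2$, and substituting the identifications above turns the right--hand inequality $\abs{1-s}<2$ into precisely the stated condition. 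So the bulk of the proof is this coefficient bookkeeping followed by a direct citation of the Lemma.

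The step I expect to be the real obstacle is the left--hand inequality $\abs{r}<\abs{1-s}$, that is $\abs{\frac{\beta}{1+\beta}}<\abs{1+Q}$, which the Lemma also demands but which the theorem as phrased does not list among its hypotheses. Over the real line the analogous requirement can usually be folded into the single displayed inequality, but in $\mathbb{C}$ the two moduli $\abs{r}$ and $\abs{1-s}$ are genuinely independent, so the clean invocation of the Lemma is not automatic. I would resolve this in one of two ways: either (i) carry the inequality $\abs{\frac{\beta}{1+\beta}}<\abs{1+Q}$ as an explicit additional hypothesis, after which the proof closes immediately; or (ii) try to show, using the algebraic relation between $r$ and $s$ forced by the explicit formula for $\bar{z}_1$, that $\abs{r}<\abs{1-s}$ is implied by $\abs{1-s}<2$ on the relevant parameter region. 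Pinning down the precise parameter set on which (ii) holds is where the genuine work lies; everything else is routine substitution into the Lemma.
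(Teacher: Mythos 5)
Your proposal follows the same route as the paper: pass to the characteristic equation of the linearization, match it to the normal form $\lambda^2-r\lambda-s=0$ of Lemma 2.1 with $r=-\frac{\beta}{1+\beta}$ and $s=-Q$, and observe that $\abs{1-s}<2$ is exactly the displayed hypothesis. Up to that point the two arguments coincide.

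The obstacle you flag --- that the lemma also requires $\abs{r}<\abs{1-s}$, which the theorem's hypothesis does not obviously deliver --- is genuine, and it is precisely the point at which the paper's own proof fails. The paper disposes of it in two steps, both of which are wrong. First it asserts that $\abs{\frac{\beta}{1+\beta}}<1$ ``is obvious for any complex number $\beta$''; this is false, since $\abs{\beta}<\abs{1+\beta}$ holds if and only if $\mathrm{Re}(\beta)>-\tfrac12$ (e.g.\ $\beta=-2$ gives $\abs{\frac{\beta}{1+\beta}}=2$). Second, it claims that $\abs{1+Q}<2$ ``obviously'' implies $\abs{\frac{\beta}{1+\beta}}<\abs{1+Q}$; this is a non sequitur, since $\abs{1+Q}$ could be small (the paper itself later exhibits parameters with $\abs{1+Q}\approx 0.83$) while $\abs{\frac{\beta}{1+\beta}}$ is close to $1$ or larger. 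So your assessment is correct: as stated, the theorem is missing a hypothesis. Your option (i) --- carrying $\abs{\frac{\beta}{1+\beta}}<\abs{1+Q}$ as an explicit additional assumption --- is the honest repair, and your option (ii) would require delimiting a parameter region on which the implication holds, which neither you nor the paper actually does. In short, your proposal is not weaker than the paper's proof; it is the paper's proof with the gap correctly identified rather than papered over.
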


\addvspace{\bigskipamount}

\begin{proof}
The equilibriums $\bar{z}_{1}=\frac{1-\sqrt{1+4 \alpha +4 \alpha  \beta }}{2 (1+\beta )}$ of the Eq.(\ref{equation:total-equationA}) is \emph{locally asymptotically stable} if the modulus of the both zeros of the characteristic equation (3) are less than $1$. By the Lemma $2.1$, the condition for making the zeros lying inside the unit disk is $\abs{\frac{\beta}{1+\beta }} < \abs{1+\frac{1+2 \alpha +\sqrt{1+4 \alpha +4 \alpha  \beta }}{2 \alpha +2 \alpha  \beta }} < 2$ since $\abs{\frac{\beta}{1+\beta }}<1$ is obvious for any complex number $\beta$. Now if $\abs{1+\frac{1+2 \alpha +\sqrt{1+4 \alpha +4 \alpha  \beta }}{2 \alpha +2 \alpha  \beta }} < 2$ then it is obvious that $\abs{\frac{\beta}{1+\beta }} < \abs{1+\frac{1+2 \alpha +\sqrt{1+4 \alpha +4 \alpha  \beta }}{2 \alpha +2 \alpha  \beta }}$. Therefore the condition boils down to $$\abs{1+\frac{1+2 \alpha +\sqrt{1+4 \alpha +4 \alpha  \beta }}{2 \alpha +2 \alpha  \beta }}<2$$

\end{proof}

\noindent
It is observed that the minimum value of the $\abs{1+\frac{1+2 \alpha +\sqrt{1+4 \alpha +4 \alpha  \beta }}{2 \alpha +2 \alpha  \beta }}$ is $1.66614$ which is less than $2$ when $\alpha=-0.82781+0.224354i$ $(\abs{\alpha}<1)$ and $\beta=0.492467-0.333602i$ $(\abs{\beta}<1)$. \\
\noindent
This numerical observation makes a guarantee that there are parameters $\alpha$ and $\beta$ in the unit disk such that the conditional inequality does hold good. Therefore existence of parameters is ensured for local asymptotic stability of the equilibrium of the difference equation Eq.(\ref{equation:total-equationA}).\\\\

\noindent
The linearized equation of the rational difference equation Eq.(\ref{equation:total-equationA}) with respect to the equilibrium point $ \bar{z}_{2} = \frac{1+\sqrt{1+4 \alpha +4 \alpha  \beta }}{2 (1+\beta )}$ is

\begin{equation}
\label{equation:linearized-equation}
\displaystyle{
z_{n+1} + \frac{\beta}{1+\beta} z_{n} + \frac{1+2 \alpha -\sqrt{1+4 \alpha +4 \alpha  \beta }}{2 \alpha +2 \alpha  \beta } z_{n-1}=0,  n=0,1,\ldots
}
\end{equation}

\noindent
with associated characteristic equation
\begin{equation}
\lambda^{2} + \frac{\beta}{1+\beta} \lambda +\frac{1+2 \alpha -\sqrt{1+4 \alpha +4 \alpha  \beta }}{2 \alpha +2 \alpha  \beta } = 0.
\end{equation}

\addvspace{\bigskipamount}

\begin{theorem}
The equilibriums $\bar{z}_{2}=\frac{1+\sqrt{1+4 \alpha +4 \alpha  \beta }}{2 (1+\beta )}$ of the Eq.(\ref{equation:total-equationA}) is locally asymptotically stable if $$\abs{1+\frac{1+2 \alpha -\sqrt{1+4 \alpha +4 \alpha  \beta }}{2 \alpha +2 \alpha  \beta }}<2$$
\end{theorem}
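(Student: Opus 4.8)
The plan is to mirror the argument of Theorem 2.2 essentially verbatim, since the two statements differ only in the sign in front of the radical $\sqrt{1+4\alpha+4\alpha\beta}$ in the constant coefficient. First I would put the characteristic equation (5), namely $\lambda^{2} + \frac{\beta}{1+\beta}\lambda + \frac{1+2\alpha-\sqrt{1+4\alpha+4\alpha\beta}}{2\alpha+2\alpha\beta}=0$, into the canonical form $\lambda^2-r\lambda-s=0$ demanded by Lemma 2.1. Matching coefficients forces the identifications $r=-\frac{\beta}{1+\beta}$ and $s=-\frac{1+2\alpha-\sqrt{1+4\alpha+4\alpha\beta}}{2\alpha+2\alpha\beta}$, so that $\abs{r}=\abs{\frac{\beta}{1+\beta}}$ and $\abs{1-s}=\abs{1+\frac{1+2\alpha-\sqrt{1+4\alpha+4\alpha\beta}}{2\alpha+2\alpha\beta}}$, the latter being exactly the quantity appearing in the statement.

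Next, by Lemma 2.1 both zeros of the characteristic polynomial lie strictly inside the unit disk — which is precisely the criterion for local asymptotic stability of $\bar{z}_{2}$ — as soon as $\abs{\frac{\beta}{1+\beta}}<\abs{1+\frac{1+2\alpha-\sqrt{1+4\alpha+4\alpha\beta}}{2\alpha+2\alpha\beta}}<2$. The final step, following the pattern of Theorem 2.2, is to collapse this two-sided chain to the single displayed inequality $\abs{1+\frac{1+2\alpha-\sqrt{1+4\alpha+4\alpha\beta}}{2\alpha+2\alpha\beta}}<2$, by invoking the observation that $\abs{\frac{\beta}{1+\beta}}<1$ and arguing that the left-hand inequality is then automatically subsumed.

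The main obstacle, as in Theorem 2.2, is this very reduction from the two-sided condition to the one-sided one. The load-bearing claim $\abs{\frac{\beta}{1+\beta}}<1$ is \emph{not} valid for every complex $\beta$: writing $\beta=a+bi$, one has $\abs{\frac{\beta}{1+\beta}}<1 \iff \abs{\beta}^{2}<\abs{1+\beta}^{2} \iff a>-\tfrac12$, so the quantity exceeds $1$ precisely when $\mathrm{Re}(\beta)<-\tfrac12$ and blows up as $\beta\to-1$. Moreover, even when $\abs{\frac{\beta}{1+\beta}}<1$, the right-hand bound $\abs{1-s}<2$ alone does not force $\abs{r}<\abs{1-s}$ (it could happen that $\abs{1-s}$ is itself small). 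A fully rigorous treatment would therefore either restrict the parameters to the half-plane $\mathrm{Re}(\beta)>-\tfrac12$ and retain both inequalities of Lemma 2.1, or keep the full two-sided condition throughout. For the purpose of recording a sufficient stability condition parallel to Theorem 2.2, however, I would present the argument in the identical form, merely carrying through the sign change $+\sqrt{\phantom{x}}\mapsto-\sqrt{\phantom{x}}$ in the radical, while flagging the half-plane restriction on $\beta$ needed to make the reduction legitimate.
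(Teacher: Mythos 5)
Your proposal takes exactly the paper's route: the paper's own proof of this theorem is the single line ``Proof is similar to the proof of the Theorem 2.2,'' and you have simply carried out that Theorem 2.2 argument with the sign change in the radical, which is all that is intended. The caveats you flag are real defects of the paper's argument as well, not just of your reconstruction: $\abs{\frac{\beta}{1+\beta}}<1$ holds only for $\mathrm{Re}(\beta)>-\frac{1}{2}$ (and fails badly near $\beta=-1$), and even granting it, the bound $\abs{1-s}<2$ does not by itself yield $\abs{r}<\abs{1-s}$, so the honest sufficient condition is the full two-sided inequality of Lemma 2.1 rather than the one-sided condition displayed in the theorem.
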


\begin{proof}
Proof is similar to the proof of the Theorem $2.2$.\\
\end{proof}

\noindent
It is found that the minimum value of the $\abs{1+\frac{1+2 \alpha -\sqrt{1+4 \alpha +4 \alpha  \beta }}{2 \alpha +2 \alpha  \beta }}$ is $0.834925$ which is less than $2$ when $\alpha=0.04008-0.237697i$ $(\abs{\alpha}<1)$ and $\beta=0.598157+0.0345986i$ $(\abs{\beta}<1)$. \\
\noindent
This observation ensures the existence of the parameters $\alpha$ and $\beta$ in the unit disk such that the conditional inequality does hold good. Consequently, the existence of the local asymptotic stability of the equilibrium $\bar{z}_2$ of the difference equation Eq.(\ref{equation:total-equationA}) is ensured.

\begin{lemma}
A necessary and sufficient condition for one root of $\lambda^2-r\lambda-s=0$ to have modulus less than one and the other root to have modulus greater than one is $\abs{r} > \abs{1-s}$.\\\\
In this case, $\lambda$ is called a saddle-point equilibrium.
\end{lemma}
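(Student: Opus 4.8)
The plan is to reduce the statement to the elementary fact that a monic quadratic changes its root-location pattern exactly when it vanishes on the unit circle, and that for a two-root problem this transition is recorded by the sign of the product $p(1)p(-1)$, where $p(\lambda)=\lambda^{2}-r\lambda-s$. First I would write down Vieta's relations: if $\lambda_{1},\lambda_{2}$ are the two roots then $\lambda_{1}+\lambda_{2}=r$ and $\lambda_{1}\lambda_{2}=-s$, so that $r=\lambda_{1}+\lambda_{2}$ and $1-s=1+\lambda_{1}\lambda_{2}$.

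Next I would evaluate $p$ at the boundary points $\pm 1$. On one hand $p(1)=1-r-s$ and $p(-1)=1+r-s$, so $p(1)p(-1)=(1-s)^{2}-r^{2}$. On the other hand, using the factorization $p(\lambda)=(\lambda-\lambda_{1})(\lambda-\lambda_{2})$, one gets $p(1)p(-1)=(1-\lambda_{1})(1-\lambda_{2})(1+\lambda_{1})(1+\lambda_{2})=(1-\lambda_{1}^{2})(1-\lambda_{2}^{2})$. Thus the quantity $(1-s)^{2}-r^{2}$ equals $(1-\lambda_{1}^{2})(1-\lambda_{2}^{2})$, and the entire lemma comes down to reading off the sign of this single product.

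To conclude I would note that $(1-\lambda_{1}^{2})(1-\lambda_{2}^{2})<0$ holds precisely when exactly one factor $1-\lambda_{j}^{2}$ is negative, i.e. exactly one root has modulus exceeding $1$ while the other has modulus below $1$ — which is the saddle configuration — whereas if both roots lie inside, or both outside, the product is positive; together with the identity above this gives $r^{2}>(1-s)^{2}$, that is $\abs{r}>\abs{1-s}$, and the complementary inequality recovers Lemma~2.1. The step I expect to be the genuine obstacle is exactly this sign reading, for it is transparent only when the roots are real, equivalently when $r,s$ are real with nonnegative discriminant; the conjugate-root case must be dispatched separately by observing that there the two moduli are equal (so no split is possible) and indeed $(1-s)^{2}-r^{2}>(1+s)^{2}\ge 0$. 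For genuinely complex $r,s$ the clean identity fails, since a direct modulus computation yields $\abs{1-s}^{2}-\abs{r}^{2}=(1-\abs{\lambda_{1}}^{2})(1-\abs{\lambda_{2}}^{2})-4\operatorname{Im}(\lambda_{1})\operatorname{Im}(\lambda_{2})$, and the extra cross term can reverse the sign; hence the crux of any correct proof is the restriction to real coefficients (or at least to configurations possessing a real root), where that cross term vanishes.
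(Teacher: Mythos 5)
The paper offers no proof of this lemma at all --- it is simply asserted (and then invoked in Theorem 2.5), so there is nothing internal to compare your argument against; it is presumably imported from the real-coefficient literature (Kulenovi\'c--Ladas). Your argument is the standard one for that setting and it is complete and correct when $r$ and $s$ are real: the identity $p(1)p(-1)=(1-s)^2-r^2=(1-\lambda_1^2)(1-\lambda_2^2)$ reads off the saddle configuration as negativity of the product, and your separate treatment of the conjugate-root case (equal moduli, product equal to $\abs{1-\lambda_1^2}^2\ge 0$) closes the only loophole, showing in passing that $\abs{r}>\abs{1-s}$ automatically forces real roots so no discriminant hypothesis is needed.

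Your closing observation is the more valuable part, and it is correct: for genuinely complex $r,s$ one has $\abs{1-s}^2-\abs{r}^2=(1-\abs{\lambda_1}^2)(1-\abs{\lambda_2}^2)-4\operatorname{Im}(\lambda_1)\operatorname{Im}(\lambda_2)$, and the cross term can reverse the sign. A concrete counterexample: $\lambda_1=\lambda_2=0.9i$ gives $r=1.8i$, $s=0.81$, so $\abs{r}=1.8>0.19=\abs{1-s}$, yet both roots lie strictly inside the unit disk. Hence the lemma as stated is \emph{false} for complex coefficients --- which is precisely the regime in which the paper applies it, since the $r$ and $s$ arising from the linearization at $\bar{z}_{\pm}$ are complex. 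So your proposal does not merely prove the real case; it identifies a genuine gap in the paper: Lemma 2.4, and therefore Theorem 2.5 and the numerical saddle-point claims that rest on it, would need either a restriction to real $r,s$ or a corrected criterion (e.g.\ one phrased directly in terms of the roots of the characteristic polynomial) to be justified in the complex setting.
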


\begin{theorem}
The equilibrium $\bar{z}_{\pm}$ is unstable, more precisely a saddle point equilibrium if $\abs{\frac{\beta}{1+\beta}} > \abs{1+\frac{1+2 \alpha \pm \sqrt{1+4 \alpha +4 \alpha  \beta }}{2 \alpha +2 \alpha  \beta }}$.\\
\end{theorem}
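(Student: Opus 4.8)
The plan is to proceed exactly as in the proofs of Theorem~2.2 and Theorem~2.3, but to invoke the saddle-point criterion of Lemma~2.3 in place of the inside-the-disk criterion of Lemma~2.1. The two lemmas are complementary: Lemma~2.1 gives a sufficient condition for both roots of $\lambda^2 - r\lambda - s = 0$ to lie inside the unit disk, whereas Lemma~2.3 gives a necessary and sufficient condition for exactly one root to lie inside and the other outside. Since the characteristic equations attached to $\bar z_1$ and $\bar z_2$ have already been recorded as Eq.(3) and Eq.(5), the entire argument reduces to matching them against the normal form $\lambda^2 - r\lambda - s = 0$ and reading off the resulting inequality.

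First I would write the characteristic equation for $\bar z_{\pm}$ in the unified form
\begin{equation*}
\lambda^2 + \frac{\beta}{1+\beta}\,\lambda + \frac{1+2\alpha \pm \sqrt{1+4\alpha+4\alpha\beta}}{2\alpha+2\alpha\beta} = 0,
\end{equation*}
where the upper sign corresponds to $\bar z_1$ and the lower sign to $\bar z_2$. Comparing with $\lambda^2 - r\lambda - s = 0$ forces the identifications
\begin{equation*}
r = -\frac{\beta}{1+\beta}, \qquad s = -\frac{1+2\alpha \pm \sqrt{1+4\alpha+4\alpha\beta}}{2\alpha+2\alpha\beta}.
\end{equation*}
Taking moduli then yields $\abs{r} = \abs{\frac{\beta}{1+\beta}}$, while
\begin{equation*}
1 - s = 1 + \frac{1+2\alpha \pm \sqrt{1+4\alpha+4\alpha\beta}}{2\alpha+2\alpha\beta},
\end{equation*}
so that $\abs{1-s} = \abs{1 + \frac{1+2\alpha \pm \sqrt{1+4\alpha+4\alpha\beta}}{2\alpha+2\alpha\beta}}$. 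By Lemma~2.3 the polynomial has one root of modulus less than one and one of modulus greater than one—that is, $\bar z_{\pm}$ is a saddle-point equilibrium—exactly when $\abs{r} > \abs{1-s}$, which is precisely the asserted inequality.

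The argument carries no genuine obstacle; its whole content is the sign bookkeeping needed to pass from the $+$ signs in the characteristic equation to the $-$ signs in the normal form of Lemma~2.3, and this is harmless once moduli are taken. The one point worth flagging is that Lemma~2.3 is a genuine \emph{equivalence}, so the conclusion is in fact ``saddle point if and only if $\abs{r}>\abs{1-s}$''; phrasing it as a sufficient condition, as the theorem does, is weaker but still correct. I would also note, as is implicit throughout, that $\alpha \neq 0$ and $\beta \neq -1$ must be assumed so that the displayed coefficients are well defined.
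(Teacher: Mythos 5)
Your proposal is correct and follows exactly the route the paper takes: the paper's entire proof is ``follows from Lemma 2.4,'' and you have simply filled in the identification $r=-\frac{\beta}{1+\beta}$, $s=-\frac{1+2\alpha\pm\sqrt{1+4\alpha+4\alpha\beta}}{2\alpha+2\alpha\beta}$ and read off $\abs{r}>\abs{1-s}$. (The saddle-point lemma is numbered 2.4 in the paper, not 2.3, but the content you invoke is the right one.)
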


\begin{proof}
The proof follows from the Lemma $2.4$.\\
\end{proof}

\noindent
It is observed that the maximum value of the $\abs{\frac{\beta}{1+\beta}} - \abs{1+\frac{1+2 \alpha \pm \sqrt{1+4 \alpha +4 \alpha  \beta }}{2 \alpha +2 \alpha  \beta }}$ is $0.959948$ when $\alpha=0.00794746+0.0120667i$ and $\beta=1.94598+7.32387i$, that is $\abs{\frac{\beta}{1+\beta}} > \abs{1+\frac{1+2 \alpha \pm \sqrt{1+4 \alpha +4 \alpha  \beta }}{2 \alpha +2 \alpha  \beta }}$ is holding well for $\alpha=0.00794746+0.0120667i$ and $\beta=1.94598+7.32387i$. This observation suggests that there are $\alpha$ and $\beta$ such that the solution is unstable about the equilibriums.\\

\noindent
Consider the parameters of the difference equation Eq.(\ref{equation:total-equationA}) $\alpha=25+22i$, $\beta=67+85i$. Here it is noted that $\abs{\beta}<\abs{1+4\alpha}$. The one of the equilibriums is $0.553877 - 0.051776i$. The linearized equation about the equilibrium $0.553877 - 0.051776i$ is $\lambda^2-r\lambda-s=0$ where $r=3.05424 - 0.661201i$ and $s=-0.0386782 + 0.015489i$. Here it is $\abs{r}>\abs{1-s}$. Therefore the equilibrium $0.553877 - 0.051776i$ is unstable (saddle point). The trajectories are given in the following Fig.$1$ which clearly depict the instability nature of the equilibrium $0.553877 - 0.051776i$.

\begin{figure}[H]
      \centering

      \resizebox{14cm}{!}
      {
      \begin{tabular}{c c}
      \includegraphics [scale=6]{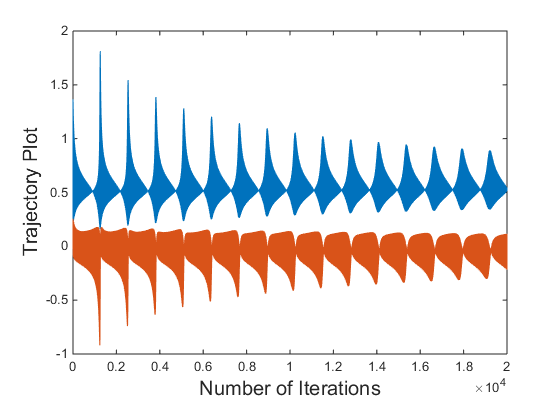}
      \includegraphics [scale=6]{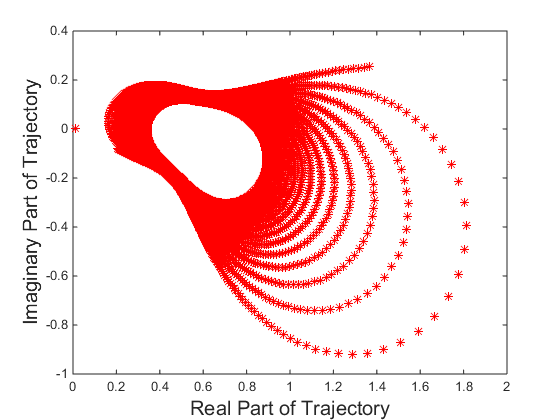}\\
      \includegraphics [scale=6]{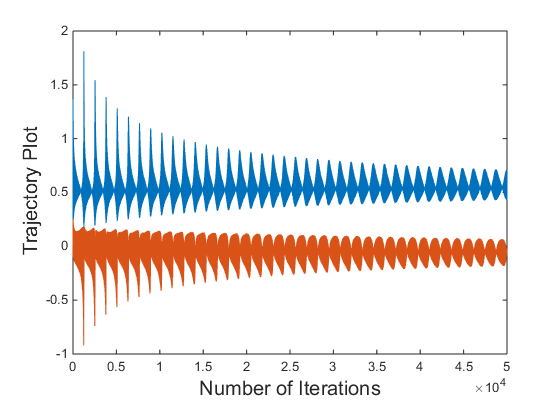}
      \includegraphics [scale=6]{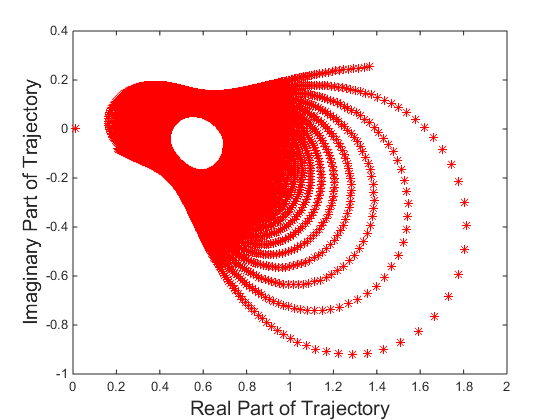}\\
      \includegraphics [scale=6]{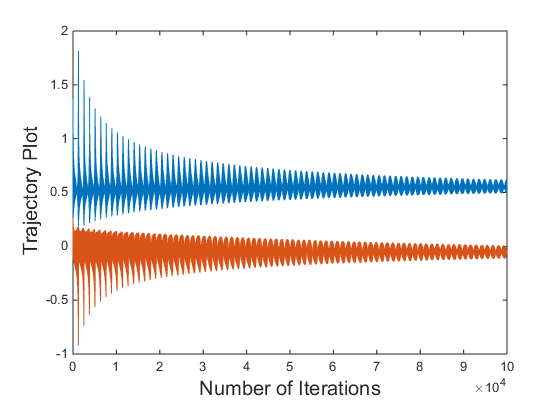}
      \includegraphics [scale=6]{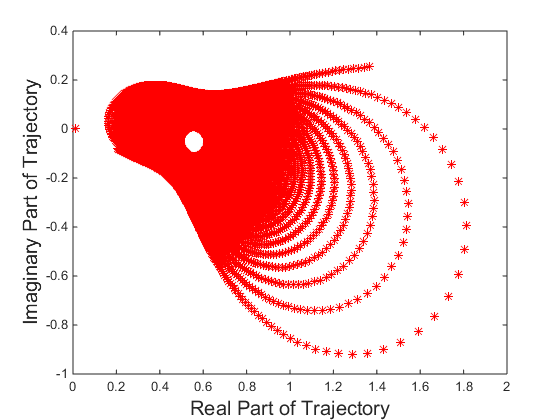}\\

            \end{tabular}
      }
\caption{Trajectory Plots.}
      \begin{center}

      \end{center}
      \end{figure}

\noindent
In Fig.1, the trajectory plots for $20000$, $50000$ and  $100000$ iterations respectively are given. it is seen in the Fig. 1 that the trajectory plots are very unstable in nature in converging to the equilibrium $0.553877 - 0.051776i$.

%
%

\subsection{A Special case $\alpha=\beta$}

When the parameters $\alpha$ and $\beta$ are equal, we shall see the local stability of the equilibriums. \\

\noindent
The equilibriums of the Eq.(\ref{equation:total-equationA}) when $\alpha=\beta$ are $1$ and $\frac{\alpha }{1+\alpha }$. It can be easily seen that the equilibrium $1$ is \emph{locally asymptotically stable} for all $\alpha$. \\\\
\noindent
The linearized equation of the rational difference equation Eq.(\ref{equation:total-equationA}) with respect to the equilibrium point $\frac{\alpha }{1+\alpha }$ is

\begin{equation}
\label{equation:linearized-equation}
\displaystyle{
z_{n+1} + \frac{2+\alpha }{1+\alpha } z_{n} + \frac{1}{\alpha +\alpha ^2} z_{n-1}=0,  n=0,1,\ldots
}
\end{equation}

\noindent
with associated characteristic equation
\begin{equation}
\lambda^{2} + \frac{2+\alpha }{1+\alpha } \lambda + \frac{1}{\alpha +\alpha ^2} = 0.
\end{equation}

\addvspace{\bigskipamount}

\begin{theorem}
The equilibriums $\frac{\alpha }{1+\alpha }$ of the Eq.(\ref{equation:total-equationA}) where $\alpha=\beta$ is locally asymptotically stable if $$\abs{1+\frac{1}{1+\alpha }} < \abs{1+\frac{1}{\alpha +\alpha ^2}}<2$$
\end{theorem}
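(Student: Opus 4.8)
The plan is to reduce the stability question to a single application of Lemma 2.1, exactly as in the proof of Theorem 2.2. First I would rewrite the characteristic equation of this subsection in the canonical form $\lambda^2 - r\lambda - s = 0$ required by Lemma 2.1. Matching coefficients against $\lambda^2 + \frac{2+\alpha}{1+\alpha}\lambda + \frac{1}{\alpha+\alpha^2} = 0$ yields $r = -\frac{2+\alpha}{1+\alpha}$ and $s = -\frac{1}{\alpha+\alpha^2}$.

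Next I would compute the two moduli that appear in Lemma 2.1. Using the identity $\frac{2+\alpha}{1+\alpha} = 1 + \frac{1}{1+\alpha}$, one obtains $\abs{r} = \abs{1 + \frac{1}{1+\alpha}}$; and from $s = -\frac{1}{\alpha+\alpha^2}$ one gets $\abs{1-s} = \abs{1 + \frac{1}{\alpha+\alpha^2}}$. By Lemma 2.1, both roots of the characteristic polynomial lie strictly inside the unit disk precisely when $\abs{r} < \abs{1-s} < 2$, and substituting the expressions above turns this into the claimed chained inequality. Since the equilibrium $\frac{\alpha}{1+\alpha}$ is locally asymptotically stable exactly when every root of its characteristic equation has modulus less than one, the theorem follows.

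The one place where this argument genuinely departs from Theorem 2.2 --- and the step I would watch most carefully --- is the left-hand inequality $\abs{r} < \abs{1-s}$. In Theorem 2.2 the coefficient of $\lambda$ was $\frac{\beta}{1+\beta}$, whose modulus was argued to be automatically below the middle term, which let the authors collapse the chain down to the single condition $\abs{1-s} < 2$. Here, by contrast, $\abs{r} = \abs{1 + \frac{1}{1+\alpha}}$ admits no such free bound, so the inequality $\abs{r} < \abs{1-s}$ cannot be dropped. This is precisely why the present statement must retain both inequalities rather than only the upper one, and the proof should carry the full chained condition from Lemma 2.1 through to the end without the simplification used earlier.
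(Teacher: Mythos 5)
Your proposal is correct and follows exactly the route the paper intends: the paper's own proof of this theorem is just the one-line remark that it is "very similar to the proof of Theorem 2.2," and you have supplied precisely the details that reference implies — casting the characteristic equation as $\lambda^2 - r\lambda - s = 0$ with $r = -\frac{2+\alpha}{1+\alpha}$, $s = -\frac{1}{\alpha+\alpha^2}$, and applying Lemma 2.1. Your closing observation, that here $\abs{r} = \abs{1+\frac{1}{1+\alpha}}$ admits no automatic bound and so the left-hand inequality cannot be discarded as it was for $\abs{\beta/(1+\beta)}$ in Theorem 2.2, correctly explains why the statement keeps the full chained condition.
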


\addvspace{\bigskipamount}

\begin{proof}
The proof is very similar to the proof of the theorem Theorem $2.2$.\\
\end{proof}

\bigskip

\begin{remark}
Consider $\alpha=0.530797553008973 + 0.779167230102011i$ and $\beta=4.670053421145915 + 1.299062084737301i$ where $\abs{1+4\alpha}=4.412249115813187$ and $\abs{\beta}=4.847366424808288$, i.e. $\abs{\beta}>\abs{1+4\alpha}$. For any initial values, the trajectory is convergent and converges to $0.464833509611819+0.121170658272098i$. The corresponding trajectory plot is given in Fig. $2$.
\end{remark}

\begin{figure}[H]
      \centering

      \resizebox{16cm}{!}
      {
      \begin{tabular}{c}
      \includegraphics [scale=5]{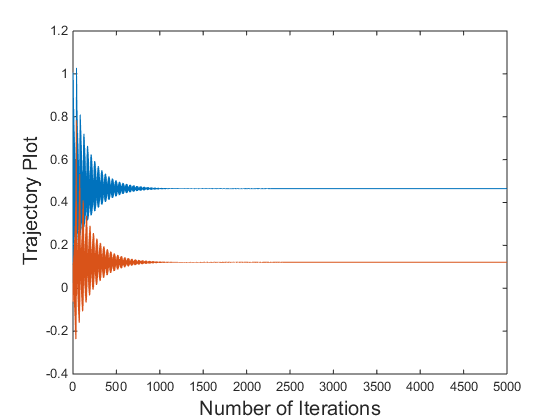}
      \includegraphics [scale=5]{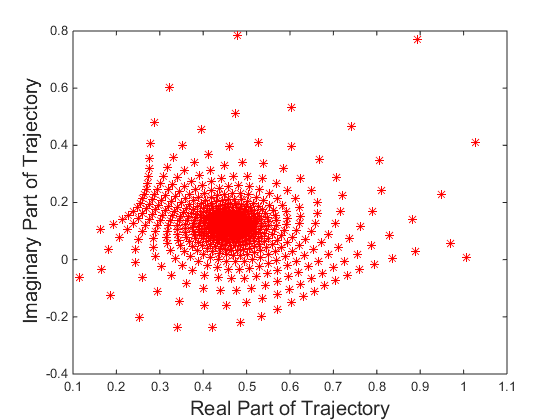}\\
      \end{tabular}
      }
\caption{Trajectory Plots.}
      \begin{center}

      \end{center}
      \end{figure}

\noindent
In the real parameters $\alpha$ and $\beta$, it is found that the equilibrium is unstable (saddle point) if $\abs{\beta}>\abs{1+4\alpha}$. and the trajectory would be periodic of prime period 2 as stated in section $1$. \\ But in the complex case, there exist complex parameters $\alpha$ and $\beta$ with $\abs{\beta}>\abs{1+4\alpha}$ where the trajectory is convergent and converges to one of the equilibriums which is neither non-trivial prime period 2 solution nor unstable. This ensures that the result obtained in the real parameters is no more valid in the complex set up.

\section{Unbounded Solutions}
Here we shall investigate the unboundedness of the solution of the difference equation Eq.(\ref{equation:total-equationA}). To proceed, we would try start asking the following question.\\

\noindent
Let $\epsilon >0$, For what values of $\alpha$, $\beta$ $\in \mathbb{C}$ does the following hold:

  $$\abs{\frac{\alpha+z_{n-1}}{\beta z_n+z_{n-1}}}<\epsilon$$ for all $z_n$ and $z_{n-1}$ $\in \mathbb{C}$ with $\abs{z_n}<\epsilon$ and $\abs{z_{n-1}}<\epsilon$.\\

\noindent
The answer to this question would confirm the parameters $\alpha$ and $\beta$ such that the trajectory would be bounded.
\noindent
First let us assume that $\beta \neq 0$. Then we can choose $\abs{z_{n-1}}=\frac{\beta}{m}$ and $z_n=-\frac{1}{m}$ where $m$ is so large that $\abs{z_n}<\epsilon$ and $\abs{z_{n-1}}<\epsilon$, then $\beta z_n+z_{n-1}=0$ so that the fraction becomes infinite. So $\beta$ must be zero and the fraction reduced to $\frac{\alpha +z_{n-1}}{z_{n-1}}$ which is infinite if $z_{n-1}=0$ and $\alpha \neq 0$. \\

\noindent
Therefore $\alpha=\beta=0$ which is a trivial solution only if $\epsilon>1$. For $\epsilon\leq1$, there is no solution at all. \\

\noindent
This observation ensures that there exist infinitely many unbounded solutions of difference equation Eq.(\ref{equation:total-equationA}).

\section{Periodic of Solutions}

\label{section:periodicity}

A solution $\{z_n\}_n$ of a rational difference equation is said to be \emph{globally periodic} of period $t$ if $z_{n+t}=z_n$ for any given initial conditions. A solution $\{z_n\}_n$ is said to be \emph{periodic with prime period} $p$ if p is the smallest positive integer having this property.\\

\noindent
We shall look for the prime period two solutions of the difference equation Eq.(\ref{equation:total-equationA}) and its corresponding local stability analysis.

\subsection{Prime Period Two Solutions}

Let $\ldots, \phi ,~\psi , ~\phi , ~\psi ,\ldots$, $\phi  \neq \psi $ be a prime period two solution of the difference equation $z_{n+1}=\frac{\alpha + z_{n-1}}{\beta z_n + z_{n-1}}$. Then $\phi=\frac{\alpha+\phi}{\beta \psi+\phi}$ and $\psi=\frac{\alpha+\psi}{\beta \phi+\psi}$. This two equations lead to the set of solutions (prime period two) except the equilibriums as $\left\{\phi \to 0.5 -\frac{0.5 \sqrt{\alpha  (4 -4 \beta )+(1 -\beta )^2}}{-1+\beta },\psi \to 0.5 +\frac{0.5 \sqrt{\alpha  (4 -4 \beta )+(1 -\beta )^2}}{-1+\beta }\right\}$.\\

\noindent
Let $\ldots, \phi ,~\psi , ~\phi , ~\psi ,\ldots$, $\phi  \neq \psi $ be a prime period two solution of the equation Eq.(\ref{equation:total-equationA}). We set $$u_n=z_{n-1}$$ $$v_n=z_{n}$$
\noindent
Then the equivalent form of the difference equation Eq.(\ref{equation:total-equationA}) is
$$u_{n+1}=z_n$$ $$v_{n+1}=\frac{\alpha+z_{n-1}}{\beta z_n+z_{n-1}}$$
\noindent
Let T be the map on $\mathbb{C}\times\mathbb{C}$ to itself defined by $$T\left(
                                                                           \begin{array}{c}
                                                                             u \\
                                                                             v \\
                                                                           \end{array}
                                                                         \right)=\left(
                                                                                    \begin{array}{c}
                                                                                      v \\
                                                                                      \frac{\alpha+u}{\beta v+u} \\
                                                                                    \end{array}
                                                                                  \right)
                                                                         $$
\noindent
Then $\left(
                                                                           \begin{array}{c}
                                                                             \phi \\
                                                                             \psi \\
                                                                           \end{array}
                                                                         \right)$ is a fixed point of $T^2$, the second iterate of $T$. \\

$$T^2\left(
                                                                           \begin{array}{c}
                                                                             u \\
                                                                             v \\
                                                                           \end{array}
                                                                         \right)=\left(
                                                                                    \begin{array}{c}
                                                                                      \frac{\alpha+u}{\beta v+u} \\\\
                                                                                      \frac{\alpha+v}{\beta \frac{\alpha+u}{\beta v+u} +v } \\
                                                                                    \end{array}
                                                                                  \right)
                                                                         $$

$$T^2\left(
                                                                           \begin{array}{c}
                                                                             u \\
                                                                             v \\
                                                                           \end{array}
                                                                         \right)==\left(
                                                                                    \begin{array}{c}
                                                                                      g(u,v) \\
                                                                                      h(u,v) \\
                                                                                    \end{array}
                                                                                  \right)
                                                                         $$

\noindent
 where $g(u,v)=\frac{\alpha+u}{\beta v+u}$ and $h(u,v)=\frac{\alpha+v}{\beta \frac{\alpha+u}{\beta v+u} +v }$. Clearly the two cycle is locally asymptotically stable when the eigenvalues of the
Jacobian matrix $J_{T^2}$, evaluated at $\left(
                                                                           \begin{array}{c}
                                                                             \phi \\
                                                                             \psi \\
                                                                           \end{array}
                                                                         \right)$ lie inside the unit disk.\\

\noindent
We have, $$J_{T^2}\left(
                                                                           \begin{array}{c}
                                                                             \phi \\
                                                                             \psi \\
                                                                           \end{array}
                                                                         \right)=\left(
                                                                                   \begin{array}{cc}
                                                                                     \frac{\delta g}{\delta u }(\phi, \psi) & \frac{\delta g}{\delta v}(\phi, \psi) \\\\
                                                                                     \frac{\delta h}{\delta u }(\phi, \psi) & \frac{\delta h}{\delta v }(\phi, \psi) \\
                                                                                   \end{array}
                                                                                 \right)
                                                                         $$

\noindent
\\
\\
where $\frac{\delta g}{\delta u }(\phi, \psi)=\frac{-\alpha +\beta  \psi }{(\phi +\beta  \psi )^2}$ and $\frac{\delta g}{\delta v }(\phi, \psi)=\frac{\beta  (\alpha +\psi ) (\alpha -\beta  \psi )}{\left(\alpha  \beta +\phi  \psi +\beta  \left(\phi +\psi ^2\right)\right)^2}$\\\\

$\frac{\delta h}{\delta u }(\phi, \psi)=-\frac{\beta  (\alpha +\phi )}{(\phi +\beta  \psi )^2}$
and $\frac{\delta h}{\delta v }(\phi, \psi)=\frac{\psi +\frac{\beta  (\alpha +\phi )}{\phi +\beta  \psi }-(\alpha +\psi ) \left(1-\frac{\beta ^2 (\alpha +\phi )}{(\phi +\beta  \psi )^2}\right)}{\left(\psi +\frac{\beta  (\alpha +\phi )}{\phi +\beta  \psi }\right)^2}$
\\ \\
\noindent
Now, set $$\chi=\frac{\delta g}{\delta u }(\phi, \psi)+\frac{\delta h}{\delta v }(\phi, \psi)=\frac{-\alpha +\beta  \psi }{(\phi +\beta  \psi )^2}+\frac{\psi +\frac{\beta  (\alpha +\phi )}{\phi +\beta  \psi }-(\alpha +\psi ) \left(1-\frac{\beta ^2 (\alpha +\phi )}{(\phi +\beta  \psi )^2}\right)}{\left(\psi +\frac{\beta  (\alpha +\phi )}{\phi +\beta  \psi }\right)^2}$$ $$\lambda=\frac{\delta g}{\delta u }(\phi, \psi) \frac{\delta h}{\delta v }(\phi, \psi)-\frac{\delta g}{\delta v }(\phi, \psi) \frac{\delta h}{\delta u }(\phi, \psi)=\frac{(-\beta  \phi +\alpha  (\phi +\beta  (-1+\psi ))) (\alpha -\beta  \psi )}{(\phi +\beta  \psi ) \left(\alpha  \beta +\phi  \psi +\beta  \left(\phi +\psi ^2\right)\right)^2}$$

\bigskip

\bigskip

\noindent
By the Linear Stability theorem, the prime period two solutions $\phi$ and $\psi$ would be locally asymptotically stable if the condition ($|\chi| < 1+ |\lambda| < 2$) holds well.\\
In particular, for the prime period $2$ solution, $\left\{\phi \to 0.5 -\frac{0.5 \sqrt{\alpha  (4 -4 \beta )+(1 -\beta )^2}}{-1+\beta },\psi \to 0.5 +\frac{0.5 \sqrt{\alpha  (4 -4 \beta )+(1 -\beta )^2}}{-1+\beta }\right\}$, we shall see the local asymptotic stability for some example cases of parameters $\alpha$ and $\beta$. The general form of $\chi$ and $\lambda$ would be very complected.\\
\noindent
Consider the prime period two solution of the difference equation Eq.(\ref{equation:total-equationA}), $\phi \to 2+3i,\psi \to 1+2i$ corresponding two the parameters $\alpha \to 1$ and $\beta \to 1+i$. \\
\noindent
In this case, $\abs{\chi}=0.00186987$ and $\abs{\lambda}=0.174209$. Therefore the condition ($0.00186987 < 1.174209 < 2$) the prime period $2$ solution $\phi \to 1.30024 + 0.624811i,\psi \to -0.300243 - 0.624811i$ is \emph{locally asymptotically stable}.\\

\noindent
Consider another example case where $\alpha=51+8i$ and $\beta=26+80i$. The prime period $2$ solution of the Eq.(\ref{equation:total-equationA}) are $\phi=1.01487 + 0.536364i$ and $\psi=-0.0148677 - 0.536364i$. Here $\abs{\chi}=0.00013967$ and $\abs{\lambda}=0.996087$.  Therefore the condition ($0.00013967 < 1.996087 < 2$) the prime period $2$ solution $\phi=1.01487 + 0.536364i$ and $\psi=-0.0148677 - 0.536364i$ is \emph{locally asymptotically stable}. The corresponding trajectory plot is given in Fig. 3.

\begin{figure}[H]
      \centering

      \resizebox{16cm}{!}
      {
      \begin{tabular}{c c}
      \includegraphics [scale=4]{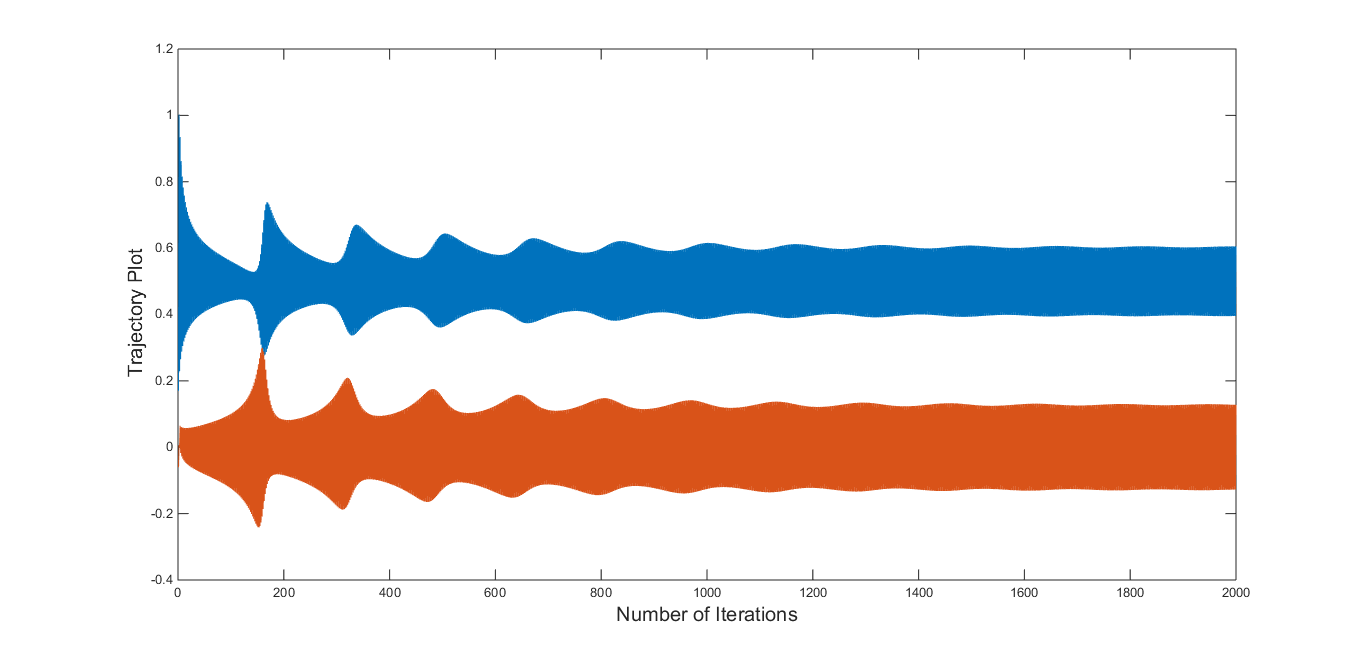}
      \includegraphics [scale=6.2]{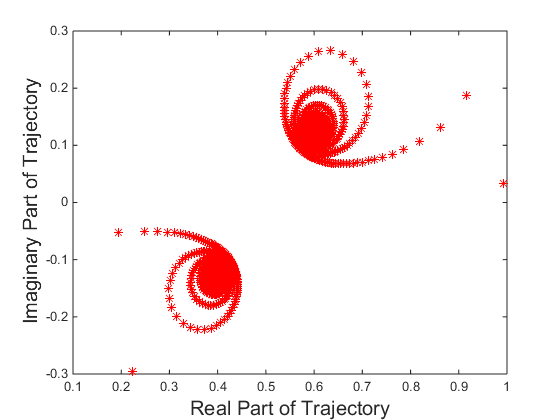}\\
      \end{tabular}
      }
\caption{Prime Period Two Trajectory of the equation (1).}
      \begin{center}

      \end{center}
      \end{figure}

\noindent
Computationally, it is seen that there does not exist any periodic solution of the difference equation Eq.(\ref{equation:total-equationA}) of period greater than 3. Hence the following conjecture has been made.\\

\begin{conjecture}
There does not exist any periodic solution of the difference equation Eq.(\ref{equation:total-equationA}) $p \geq 3$.
\end{conjecture}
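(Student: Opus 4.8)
The plan is to recast the conjecture as a statement about the exact periods of the planar map $T(u,v)=\left(v,\frac{\alpha+u}{\beta v+u}\right)$ introduced above: a prime period $p$ solution of Eq.(\ref{equation:total-equationA}) is precisely a point $(u,v)\in\mathbb{C}^2$ whose $T$-orbit has exact period $p$. First I would clear denominators in the identity $T^p(u,v)=(u,v)$ to obtain two polynomials $F_p,G_p\in\mathbb{Q}(\alpha,\beta)[u,v]$, so that the period-$p$ points form the variety $V(F_p,G_p)$ after deleting the spurious locus $\{\beta v+u=0\}$ and its iterates (where the map blows up). The prime-period-$p$ points are then $V(F_p,G_p)\setminus\bigcup_{d\mid p,\,d<p}V(F_d,G_d)$, and the conjecture asserts this set is empty for every $p\ge 3$, uniformly in $(\alpha,\beta)$.

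The decisive idea, and the only route I see to a uniform-in-$p$ argument, is to seek a first integral of $T$: a nonconstant rational $I(u,v)$ with $I\circ T=I$. I would search within the biquadratic (QRT) ansatz $I(u,v)=\frac{p_2(u,v)}{p_1(u,v)}$ with $p_1,p_2$ of bidegree $(2,2)$, since second-order rational recurrences of this shape frequently preserve such a curve; the already-extracted period-two relations $\phi+\psi=1$ and $\phi\psi=\frac{\alpha}{\beta-1}$ hint at exactly this kind of rigid algebraic structure. Given an integral $I$, every orbit lies on a level curve $\Gamma_c=\{I=c\}$, $T$ restricts to an automorphism of each $\Gamma_c$, and the period problem becomes one-dimensional. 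I would then compute the geometric genus of the generic $\Gamma_c$: if it is genus zero, $\Gamma_c\cong\mathbb{P}^1$ and $T|_{\Gamma_c}$ is a M\"obius transformation whose periodic points have a single period fixed by its multiplier; if it is genus one, $T|_{\Gamma_c}$ is a translation and the period-$p$ points are exactly the $p$-torsion. In either case the conjecture reduces to one rigidity statement — that the fiberwise datum (the M\"obius multiplier, respectively the order of the translation point) is an involution, i.e. exactly $2$-torsion, for every fiber and every $(\alpha,\beta)$ — which is what would let period two persist while forbidding every higher period.

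The hard part will be precisely this last determination. Establishing integrability at all is the first obstacle: the QRT search may fail, and proving the \emph{absence} of a rational first integral (forcing a different method) is itself delicate. Granting an integral, the crux is to show the fiberwise multiplier/torsion datum never attains order $p\ge 3$ \emph{across the full two-parameter family} $(\alpha,\beta)\in\mathbb{C}^2$; in particular the genus-one scenario must be ruled out outright, since an elliptic fiber carries $p$-torsion for every $p$ and would immediately contradict the statement. This is where the proof must do real work: a naive dimension count (a prime-period-$p$ orbit is cut out by $p$ equations in $p$ unknowns with two free parameters) predicts isolated period-$p$ points for open parameter sets, so the argument has to produce a genuinely rigid mechanism — most plausibly that $T$ is conjugate on each fiber to a fixed involution — rather than any soft genericity estimate.

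As a fallback that at least corroborates the conjecture, I would run the elimination of the first step for each fixed $p$: eliminate $v$ from $F_p,G_p$ by resultants to get a single polynomial $R_p(u;\alpha,\beta)$, divide out the equilibrium contributions and (for even $p$) the period-two factor, and verify the remaining factor has no admissible roots. This settles $p=3,4,5,\dots$ one value at a time and confirms the conjecture up to the reach of the computation, but it cannot by itself deliver the uniform claim for all $p\ge 3$, which must rest on the structural argument above.
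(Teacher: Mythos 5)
There is a genuine gap, and it starts with a category error: the statement you are addressing is a \emph{conjecture} in the paper, supported only by computational evidence (the authors simply report that no periodic solutions of period greater than two were found numerically), and your proposal likewise does not prove it. What you have written is a research program in which every decisive step is left open, and you say so yourself: you do not establish the existence of a rational first integral, you do not compute the genus of the putative invariant fibration, and you do not carry out the ``fiberwise rigidity'' determination that you correctly identify as the crux. In the genus-zero scenario the M\"obius multiplier varies with the fiber and with $(\alpha,\beta)$, so ruling out its passing through a primitive $p$-th root of unity on \emph{some} fiber for \emph{some} parameter pair is precisely the uniform statement to be proved, and nothing in the proposal constrains it; in the genus-one scenario, as you note, $p$-torsion exists for every $p$ and the conjecture would simply be false on those fibers. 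Your fallback (resultant elimination for each fixed $p$) is essentially the same finite computational check the paper already performed and, as you concede, cannot reach the quantifier ``for all $p\ge 3$.''

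More damaging, the linchpin of your plan --- a QRT-type first integral $I$ with $I\circ T=I$ --- is in direct tension with the paper's own Section 5: the authors exhibit, for parameter values satisfying $\abs{\beta}<\abs{1+4\alpha}$, trajectories with numerically positive largest Lyapunov exponents (e.g.\ $1.3215$, $1.8997$). A map preserving a rational fibration by curves confines every orbit to an algebraic level set and has zero algebraic entropy, so genuine chaos of the kind the paper reports precludes the invariant-curve mechanism on which your entire uniform argument rests. Indeed, the coexistence you would need --- chaotic orbits (per the paper) together with a total absence of periodic orbits of period $\ge 3$ --- is itself dynamically suspect, since hyperbolic chaos typically forces horseshoes and hence periodic points of all periods; your own dimension count points the same way. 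So the honest conclusion is that your approach, if pushed through, is more likely to \emph{refute} the conjecture than to prove it, and in its current form it proves nothing: it neither closes the integrability question nor supplies the rigidity mechanism, leaving the statement exactly where the paper leaves it --- an open conjecture.
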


\noindent
\\
In the case of real parameters $\alpha$ and $\beta$, it is an open problem to determine the basin of attraction of the prime period two cycle when $\beta > 1+ 4\alpha$ holds. In the complex parameters a computational study has been made in gathering the prime period two solutions which is given in the following Table 1. Also the prime period two solutions of $52$ different cases are plotted in the Fig. $4$.

\begin{figure}[H]
      \centering

      \resizebox{9cm}{!}
      {
      \begin{tabular}{c}
      \includegraphics [scale=4]{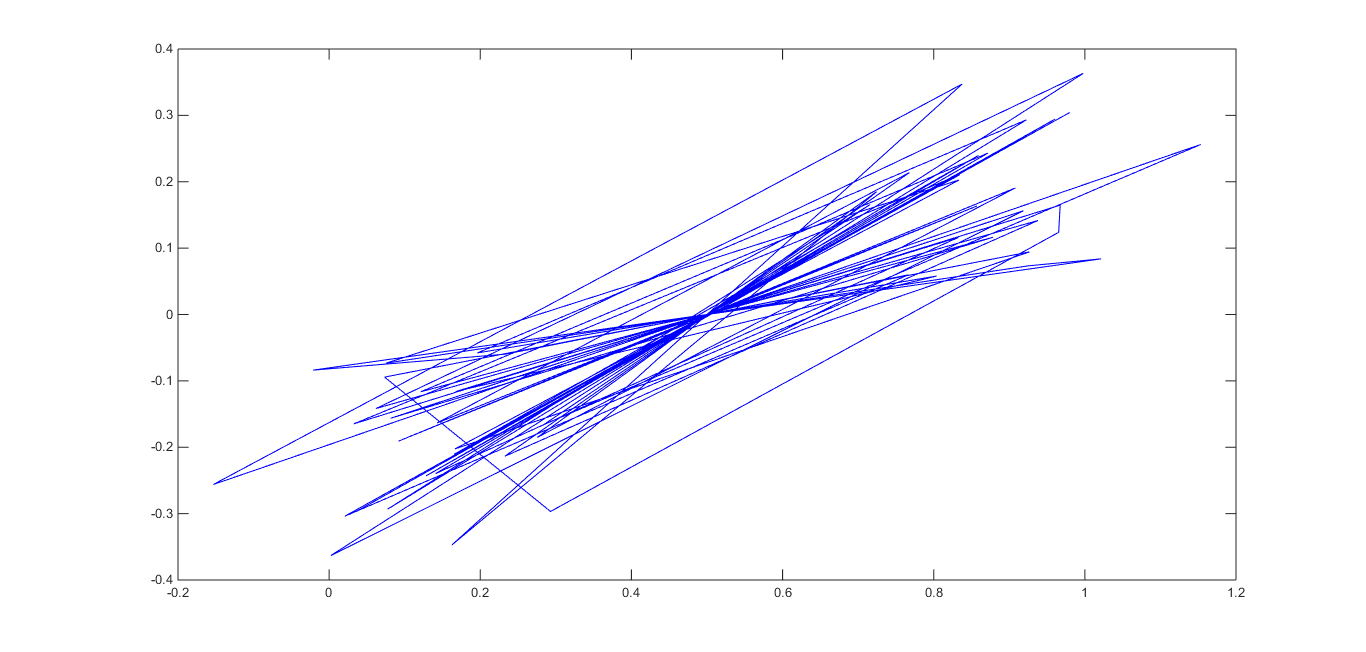}
      \end{tabular}
      }
\caption{Chaotic Trajectories of the equation Eq.(\ref{equation:total-equationA}) of four different cases as stated in Table 1.}
      \begin{center}

      \end{center}
      \end{figure}

\noindent
When the parameters $\alpha$ and $\beta$ such that $\abs{\beta}>\abs{1+4\alpha}$ holds, then the prime period two solutions are the zeros of the quadratic polynomial $t^2-t+\frac{\alpha}{\beta-1}$ which has been seen computationally.

\begin{table}[H]

\begin{tabular}{| m{5cm}||m{5cm}| |m{5cm}|}
\hline
\centering   \textbf{Parameters} $\alpha$, $\beta$ &
\begin{center}
\textbf{$\abs{\beta}>\abs{1+4\alpha}$}
\end{center}
 &
\begin{center}
\textbf{$\phi$ and $\psi$}
\end{center} \\
\hline
\centering $\alpha=0.6855 + 0.2941i$, $\beta=1.06125 + 2.49727i$ & \begin{center}
$\checkmark$
\end{center}
&
\begin{center}
$\phi=0.03921 - 0.29456i$, $\psi=0.96078 + 0.29456i$
\end{center}\\
\hline
\centering $\alpha=0.42264 + 0.35960i$, $\beta=1.116 + 2.2276i $ &
\begin{center}
$\checkmark$
\end{center}
 &
\begin{center}
$\phi=
0.1282 - 0.24313i$, $\psi=0.871775 + 0.24313i$
\end{center}\\
\hline
\centering $\alpha=0.290185 + 0.31752i$, $\beta=1.30738 + 2.8708i$ & \begin{center}
$\checkmark$
\end{center}
&
\begin{center}
$\phi=0.12117 - 0.116448i$, $\psi=0.878827 + 0.116448i
$
\end{center} \\
\hline
\centering $\alpha=0.1806 + 0.04505i$, $\beta=1.446346 + 1.04231i$ & \begin{center}
$\checkmark$
\end{center} &
\begin{center}
$\phi=0.08145 - 0.15624i$, $\psi=0.918547 + 0.15624i$
\end{center}\\
\hline
\centering $\alpha=0.78287 + 0.69378i$, $\beta=0.019604 + 2.5296i$ & \begin{center}
$\checkmark$
\end{center} &
\begin{center}
$\phi=0.00229 - 0.36314i$, $\psi=0.9977 + 0.36314i$
\end{center}\\
\hline
\centering $\alpha=0.3389 + 0.2101i$, $\beta=1.020305 + 2.71909i$ & \begin{center}
$\checkmark$
\end{center} &
\begin{center}
$\phi=0.06199 - 0.141634i$, $\psi=0.938 + 0.14163i$
\end{center}\\
\hline
\centering $\alpha=0.50128 + 0.43172i$, $\beta=1.99512 + 2.4348i$ & \begin{center}
$\checkmark$
\end{center} &
\begin{center}
$\phi=0.2324 - 0.2136i$, $\psi=0.7675 + 0.2136i$
\end{center}\\
\hline
\centering $\alpha=0.2815 + 0.23038i$, $\beta=1.4222 + 1.87371i$ & \begin{center}
$\checkmark$
\end{center} &
\begin{center}
$\phi=0.14301 - 0.16331i$, $\psi=0.85698 + 0.16331i$
\end{center}\\
\hline
\centering $\alpha=0.45134 + 0.2409i$, $\beta=1.43009 + 2.5685i$ & \begin{center}
$\checkmark$
\end{center} &
\begin{center}
$\phi=0.09193 - 0.190715i$, $\psi=0.90806 + 0.19071i$
\end{center}\\
\hline
\centering $\alpha=0.1386 + 0.5882i$, $\beta=0.732313 + 2.42027i$ & \begin{center}
$\checkmark$
\end{center} &
\begin{center}
$\phi=0.2754 - 0.1851i$, $\psi=0.72455 + 0.1851i$
\end{center}\\
\hline
\hline
\centering $\alpha=0.44944 + 0.96353i$, $\beta=0.084595 + 2.91887i$ & \begin{center}
$\checkmark$
\end{center} &
\begin{center}
$\phi=0.16239 - 0.3472i$, $\psi=0.8376 + 0.3472i$
\end{center}\\
\hline
\end{tabular}
\caption{Prime Period Two Solutions of the equation Eq.(\ref{equation:total-equationA}) for different choice of parameters.}
\label{Table:}
\end{table}

\section{Chaotic Solutions}
This is something which is absolutely new feature of the dynamics of the difference equation Eq.(\ref{equation:total-equationA}) which did not arise in the real set up of the same difference equation. Computationally we have encountered some chaotic solutions of the difference equation Eq.(\ref{equation:total-equationA}) for some parameters $\alpha$ and $\beta$ which are given in the following Table $2$. \\\\
\noindent
In this present study, the largest Lyapunov exponent is calculated for a given solution of finite length numerically \cite{Wolf} to show the trajectories are chaotic.\\\\
\noindent
From computational evidence, it is arguable that for complex parameters $\alpha$ and $\beta$ which are stated in the following Table $2$, the solutions are chaotic for every initial values.\\\\

\begin{table}[H]

\begin{tabular}{| m{5cm}||m{4cm}| |m{5cm}|}
\hline
\centering   \textbf{Parameters} $\alpha$, $\beta$ &
\begin{center}
\textbf{$\abs{\beta}<\abs{1+4\alpha}$}
\end{center}
 &
\begin{center}
\textbf{Lyapunav exponent}
\end{center} \\
\hline
\centering $\alpha=0.096455 + 0.13197i$, $\beta=0.94205 + 0.95613i$ & \begin{center}
$\checkmark$
\end{center}
&
\begin{center}
$1.3215$
\end{center}\\
\hline
\centering $\alpha=0.8235 + 0.175i$, $\beta=0.32713 + 1.9979i $ &
\begin{center}
$\checkmark$
\end{center}
 &
\begin{center}
$1.021$
\end{center}\\
\hline
\centering $\alpha=0.7195 + 0.9961i$, $\beta=0.70906 + 2.9137i$ & \begin{center}
$\checkmark$
\end{center}
&
\begin{center}
$0.5564$
\end{center} \\
\hline
\centering $\alpha=0.5747 + 0.3260i$, $\beta=0.9128 + 2.1413i$ & \begin{center}
$\checkmark$
\end{center} &
\begin{center}
$1.373$
\end{center}\\
\hline
\centering $\alpha=0.8322 + 0.61739i$, $\beta=1.04025 + 2.5916i$ & \begin{center}
$\checkmark$
\end{center} &
\begin{center}
$1.0655$
\end{center}\\
\hline
\centering $\alpha=0.140255 + 0.2601i$, $\beta=0.1736 + 1.288i$ & \begin{center}
$\checkmark$
\end{center} &
\begin{center}
$1.0985$
\end{center}\\
\hline
\centering $\alpha=0.5114 + 0.0606i$, $\beta=1.45137 + 1.6696i$ & \begin{center}
$\checkmark$
\end{center} &
\begin{center}
$1.8997$
\end{center}\\
\hline
\centering $\alpha=0.8954 + 0.5825i$, $\beta=1.165 + 2.564i$ & \begin{center}
$\checkmark$
\end{center} &
\begin{center}
$0.7655$
\end{center}\\
\hline
\centering $\alpha=0.9720 + 0.0314i$, $\beta=1.6708 + 2.5071i$ & \begin{center}
$\checkmark$
\end{center} &
\begin{center}
$0.7658$
\end{center}\\
\hline
\centering $\alpha=0.8989 + 0.3536i$, $\beta=0.2403 + 1.7073i$ & \begin{center}
$\checkmark$
\end{center} &
\begin{center}
$1.7325$
\end{center}\\
\hline
\end{tabular}
\caption{Chaotic solutions of the equation Eq.(\ref{equation:total-equationA}) for different choice of parameters and initial values.}
\label{Table:}
\end{table}

\noindent
 The chaotic trajectory plots including corresponding complex plots of four examples whose parameters are given in Table 1 starting from top row are given the following Fig. $5$.

\begin{figure}[H]
      \centering

      \resizebox{16cm}{!}
      {
      \begin{tabular}{c c}
      \includegraphics [scale=4]{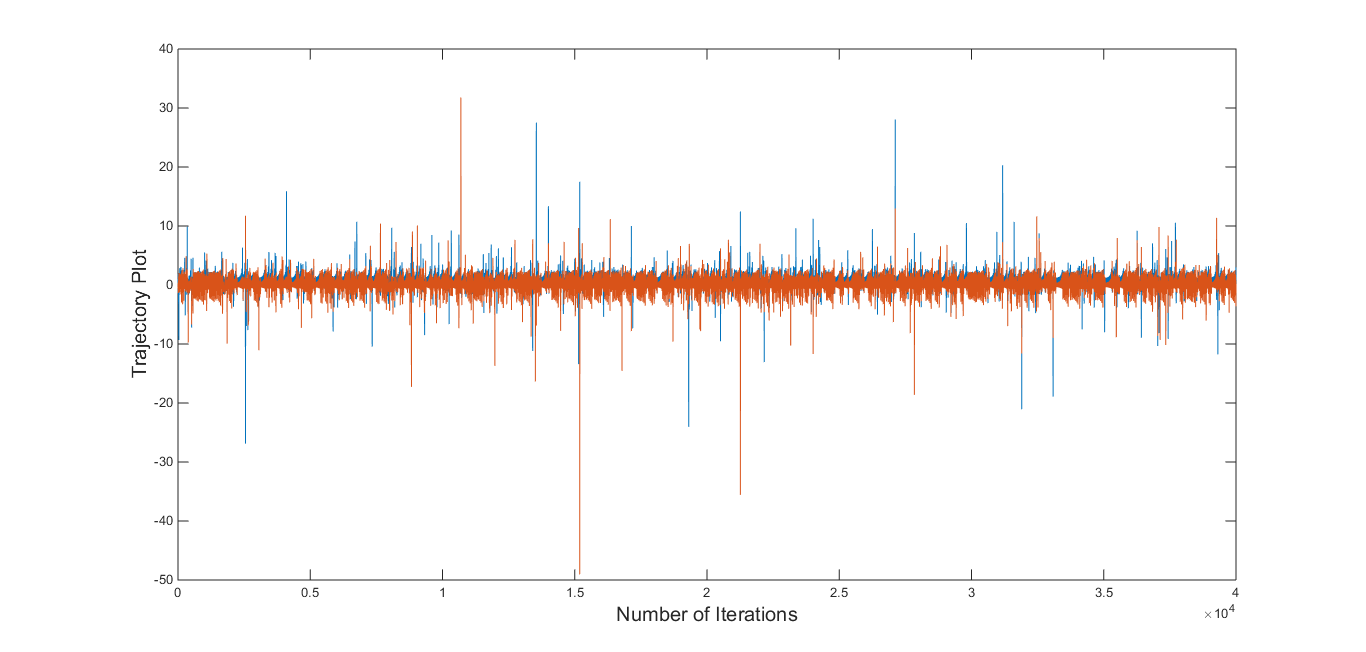}
      \includegraphics [scale=6.2]{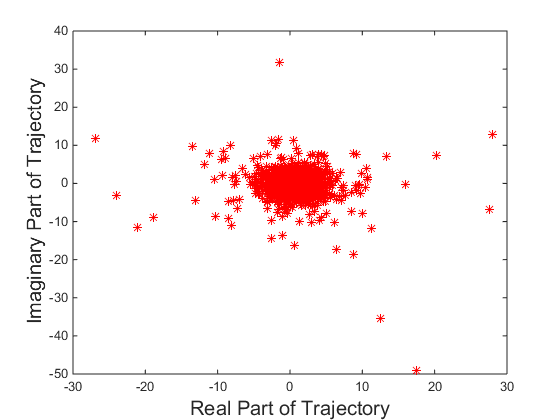}\\
      \includegraphics [scale=4]{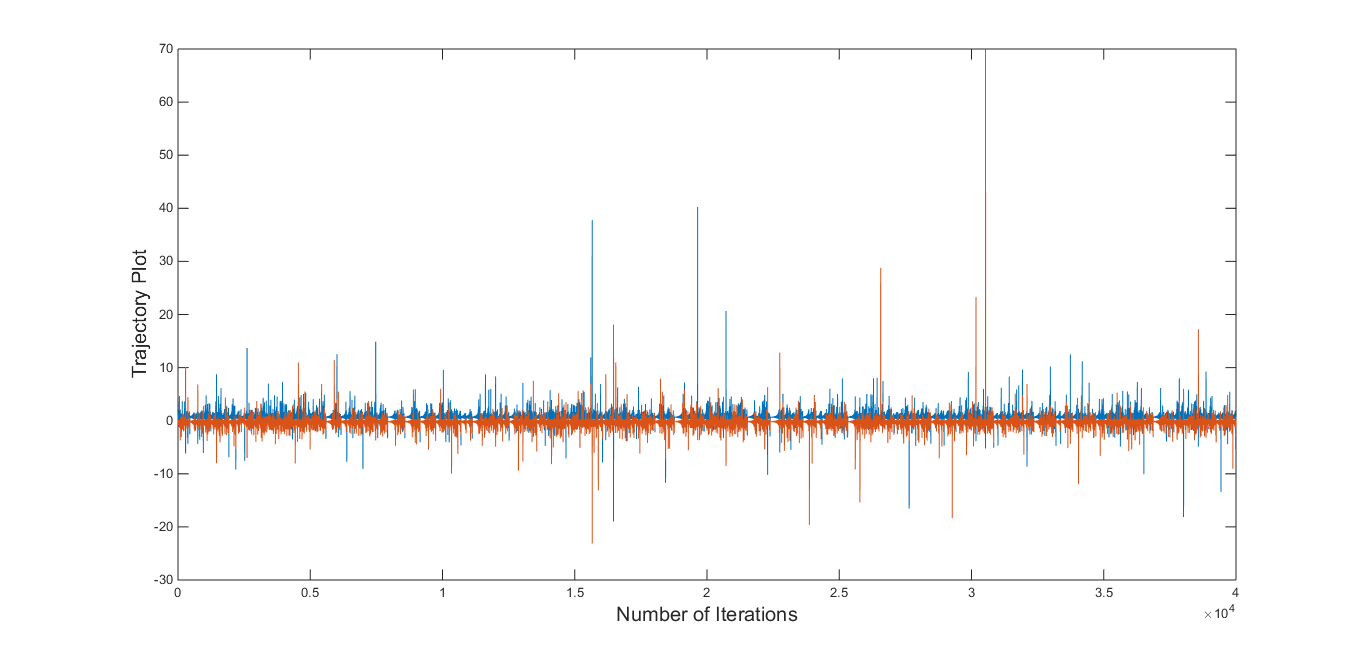}
      \includegraphics [scale=6.2]{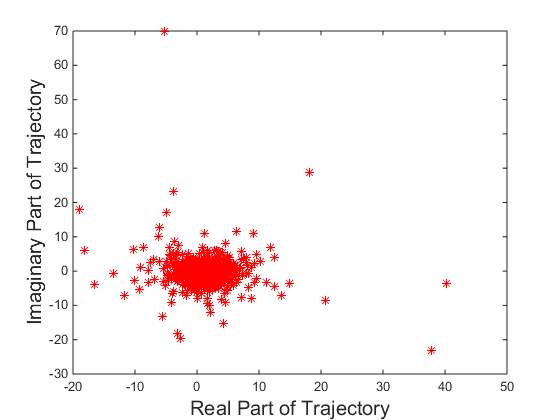}\\
      \includegraphics [scale=4]{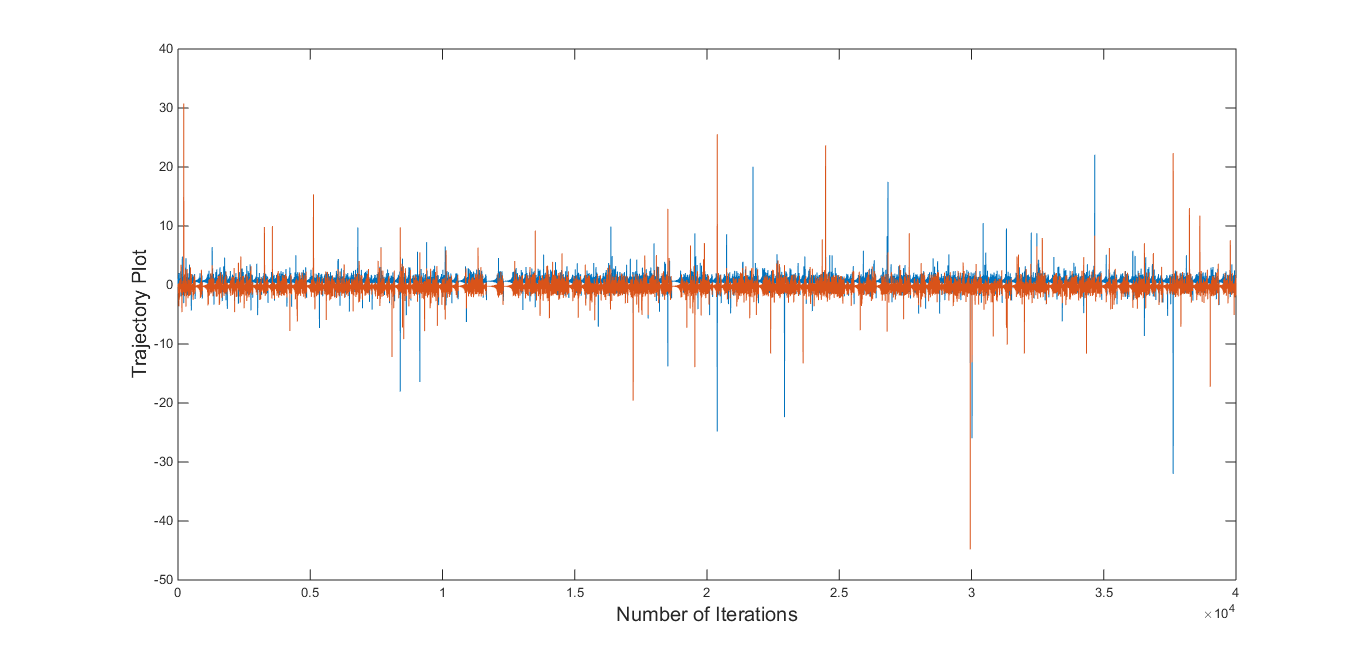}
      \includegraphics [scale=6.2]{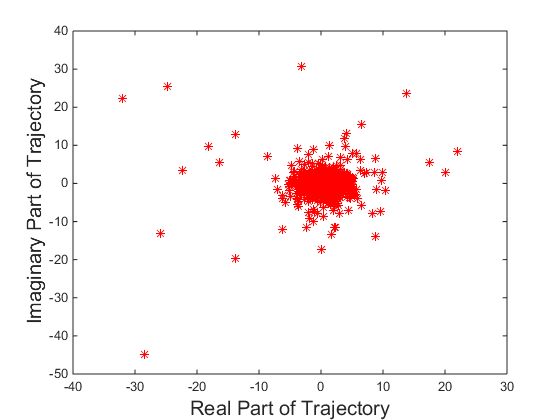}\\
      \includegraphics [scale=4]{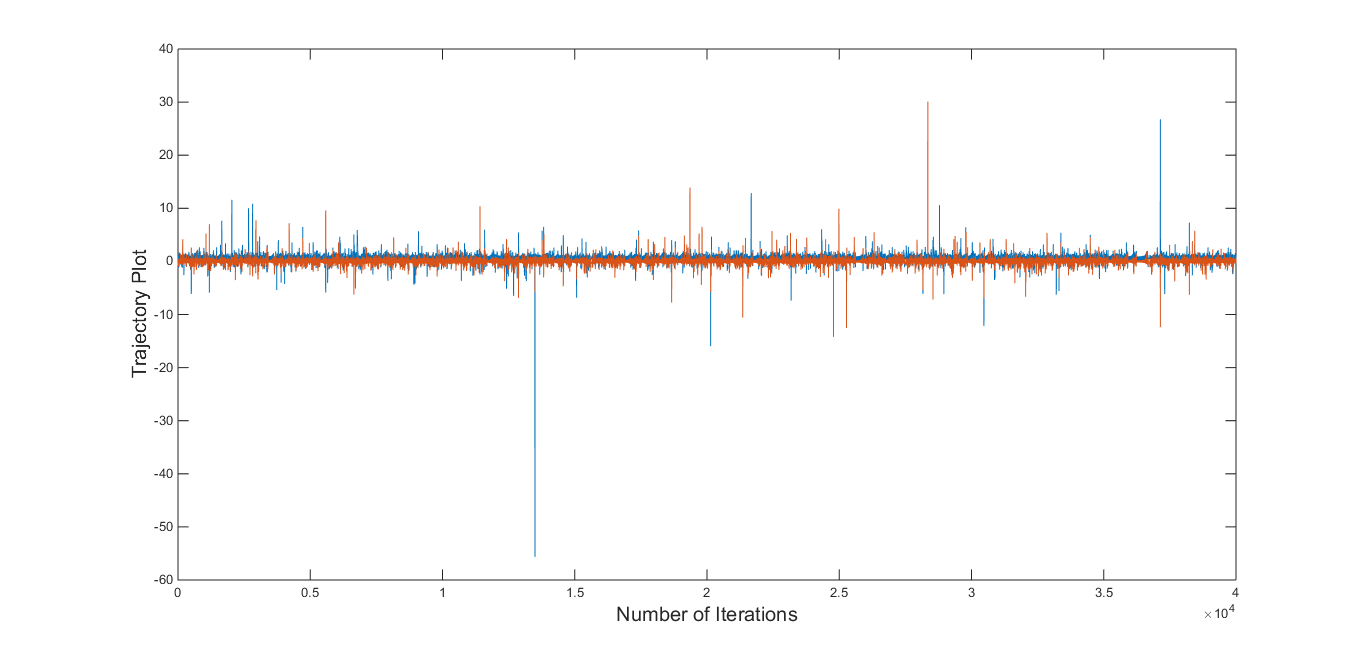}
      \includegraphics [scale=6.2]{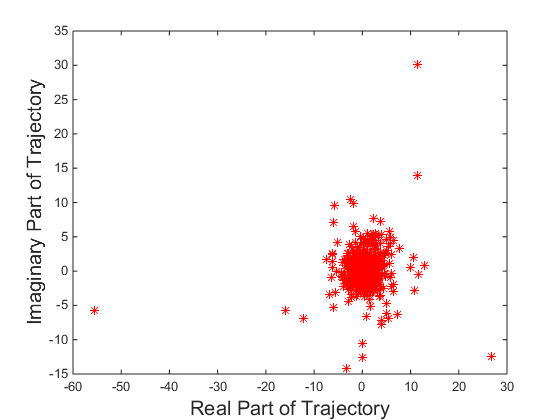}\\
      \end{tabular}
      }
\caption{Chaotic Trajectories of the equation Eq.(\ref{equation:total-equationA}) of four different cases as stated in Table 1.}
      \begin{center}

      \end{center}
      \end{figure}
\noindent
In the Fig. 5, for each of the four cases ten different initial values are taken and plotted in the left and in the right corresponding complex plots are given. From the Fig. 5, it is evident that for the four different cases the basin of the chaotic attractor is neighbourhood of the centre $(0, 0)$ of complex plane.
\noindent
In the Table $2$, a few example cases of chaotic trajectories are given where it is observed and noted that $\abs{\beta}<\abs{1+4\alpha}$ condition holds but in the case of real parameters $\alpha$ and $\beta$, $\beta<1+4\alpha$ was the condition for local asymptotic stability of equilibrium. In this regard, a conjecture has been made.

\begin{conjecture}
The chaotic solutions of the difference equation Eq.(\ref{equation:total-equationA}) exist if $\abs{\beta}<\abs{1+4\alpha}$ holds.

\end{conjecture}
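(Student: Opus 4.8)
\noindent\emph{A proposed strategy.}
The plan is to establish genuine chaos (in the sense of Li--Yorke and Devaney) by exhibiting a Smale horseshoe for a high iterate of the map $T$, rather than by any symbol-free or purely numerical argument. A preliminary structural observation guides the choice of tool: $T(u,v)=\left(v,\frac{\alpha+u}{\beta v+u}\right)$ is birational and generically invertible, since solving $T(u,v)=(a,b)$ forces $v=a$ and $u=\frac{b\beta a-\alpha}{1-b}$. Consequently Marotto's snap-back-repeller theorem is \emph{unavailable}: a snap-back repeller requires a non-injective map, whereas here $T^{-M}(\bar z)=\{\bar z\}$, so the only point whose forward orbit lands on the equilibrium in finitely many steps is the equilibrium itself. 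The correct device for an invertible analytic map is the Smale--Birkhoff homoclinic theorem, whereby a \emph{transverse} homoclinic point of a hyperbolic saddle produces an invariant set on which some iterate of $T$ is topologically conjugate to the full shift, yielding positive topological entropy, sensitive dependence and dense periodic points.

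\noindent
The first step is hyperbolicity. Viewing $T$ as a real-analytic map of $\mathbb{R}^4\cong\mathbb{C}^2$ and using that $T$ is holomorphic, the real Jacobian at an equilibrium $\bar z$ has spectrum $\{\lambda_1,\bar\lambda_1,\lambda_2,\bar\lambda_2\}$, where $\lambda_1,\lambda_2$ are the roots of the characteristic equation of Section~2. I would show that in the regime $\abs{\beta}<\abs{1+4\alpha}$ one complex eigenvalue lies strictly inside and the other strictly outside the unit circle --- precisely the saddle condition $\abs{r}>\abs{1-s}$ of Lemma~2.4, which is already verified at $\alpha=25+22i,\ \beta=67+85i$. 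Hence $\bar z$ is a hyperbolic saddle with two-dimensional stable and unstable manifolds $W^{s}(\bar z),\ W^{u}(\bar z)$, whose existence and analyticity follow from the stable-manifold theorem. Translating $\abs{r}>\abs{1-s}$ back into $\abs{\beta}<\abs{1+4\alpha}$ using the explicit coefficients is a self-contained (if tedious) computation.

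\noindent
The crux, and the main obstacle, is to produce a transverse intersection of $W^{s}(\bar z)$ and $W^{u}(\bar z)$ at a point distinct from $\bar z$. For a rational map with free complex parameters there is no closed form for these manifolds, so direct verification is hopeless, and I would proceed in two complementary ways. First, perturbatively: locate a degenerate limit of $(\alpha,\beta)$ in which Eq.(\ref{equation:total-equationA}) admits an explicit integral or a homoclinic loop, then carry out a Melnikov computation; a simple zero of the Melnikov function forces transversal splitting of the separatrices, and the resulting horseshoe persists on an open parameter set inside the region $\abs{\beta}<\abs{1+4\alpha}$. Second, for the specific parameters of Table~2, a rigorous computer-assisted argument (interval arithmetic with cone conditions, in the spirit of Zgliczynski-type covering relations) can certify a covering relation for a suitable iterate $T^{k}$ and hence a genuine horseshoe, upgrading the numerical Lyapunov exponents to a proof of chaos at those points.

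\noindent
Finally I would address why the \emph{inequality} $\abs{\beta}<\abs{1+4\alpha}$ is the right hypothesis. A complementary algebraic route computes the first dynamical degree $\lambda_{1}(T)$ of the quadratic Cremona extension $T[x:y:w]=[\,y(\beta y+x):w(\alpha w+x):w(\beta y+x)\,]$ of $\mathbb{P}^2$; if $\lambda_{1}>1$ after regularising the indeterminacy locus, the theorems of Diller--Favre and Bedford--Diller give $h_{\mathrm{top}}(T)=\log\lambda_{1}>0$ together with an ergodic measure of maximal entropy supported on saddle periodic orbits. The difficulty is now the opposite one: the dynamical degree is a birational invariant and is typically \emph{constant} across the family, so positive entropy alone cannot detect the inequality. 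This strongly suggests that $\abs{\beta}<\abs{1+4\alpha}$ is not a criterion for the \emph{existence} of a horseshoe but for its \emph{observability}: namely, for Lebesgue-almost-every orbit to be captured by the chaotic set rather than to converge to the stable equilibrium or the period-two cycle of Sections~2 and~4. A complete proof would therefore have to exhibit, under $\abs{\beta}<\abs{1+4\alpha}$, a forward-invariant trapping region enclosing the homoclinic tangle and disjoint from the basins of the sinks, and to rule out competing attractors; establishing such a trapping region is expected to be as demanding as the homoclinic analysis itself.
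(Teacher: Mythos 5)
First, note that the statement you were asked to prove is labelled a \emph{conjecture} in the paper: the authors offer no proof at all, only Table~2 of numerically computed largest Lyapunov exponents for ten parameter pairs satisfying $\abs{\beta}<\abs{1+4\alpha}$. Your text is therefore not being measured against an actual argument in the paper, and it is fair to say that your proposal is mathematically far more substantive than what the paper provides. However, it is a research program, not a proof: every load-bearing step is introduced with ``I would show'', ``I would proceed'', or is deferred to a Melnikov computation or a computer-assisted certification that is never carried out. As written, nothing in the proposal establishes the existence of a single chaotic orbit for a single admissible $(\alpha,\beta)$, so the conjecture remains exactly as open after your argument as before it.

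Two gaps deserve to be named concretely. (i) You assert that translating the saddle condition $\abs{r}>\abs{1-s}$ of Lemma~2.4 into $\abs{\beta}<\abs{1+4\alpha}$ is ``a self-contained (if tedious) computation.'' There is no reason to believe these two conditions are equivalent, and the paper itself supplies counter-evidence: Theorems~2.2 and~2.3 show that local asymptotic stability in the complex setting is governed by the inequality $\abs{1+\frac{1+2\alpha\pm\sqrt{1+4\alpha+4\alpha\beta}}{2\alpha+2\alpha\beta}}<2$, which is not a rephrasing of $\abs{\beta}$ versus $\abs{1+4\alpha}$, and Remark~2.1 exhibits parameters with $\abs{\beta}>\abs{1+4\alpha}$ for which the equilibrium is nonetheless attracting. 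The single numerical check at $\alpha=25+22i$, $\beta=67+85i$ does not give hyperbolicity on the whole region $\abs{\beta}<\abs{1+4\alpha}$, and without that your stable/unstable manifolds need not exist where you need them. (ii) The transverse homoclinic intersection --- the entire engine of the Smale--Birkhoff argument --- is never produced; you correctly identify that closed forms are unavailable and propose a perturbative Melnikov analysis from ``a degenerate limit'' that is never specified, so the crux is acknowledged rather than resolved. Your closing observation, that the inequality more plausibly governs the \emph{observability} of a chaotic set than its existence, is a genuinely useful critique of the conjecture as stated (and of the paper's framing), but it also concedes that the strategy cannot, even in principle, explain why $\abs{\beta}<\abs{1+4\alpha}$ should be the correct hypothesis.
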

\section{Future Endeavors}

In continuation of the present work the study of the difference equation ${z_{n+1}=\frac{\alpha_n+z_{n-1}}{\beta_n z_n + z_{n-1}}}$ where $\alpha_n$, $\beta_n$, are all convergent sequence of complex numbers and converges to $\alpha$, $\beta$ respectively is indeed would be very interesting and that we would like to pursue further. Also the most generalization of the present rational difference equation with delay terms is $${z_{n+1}=\frac{\alpha + z_{n-l}}{\beta z_{n-k} +z_{n-l}}}$$ where $l$ and $k$ are delay terms and it demands similar analysis which we plan to pursue in near future. The similar technique can also be used to solve nonlinear coupled differential equations which arise in various engineering problems.



\end{document}